\newtheorem{theorem}{Theorem}
\theoremstyle{plain}
\newtheorem{definition}{Definition}
\newtheorem{example}{Example}
\newtheorem{lemma}{Lemma}
\numberwithin{equation}{section}
\begin{document}
\title[Surfaces with quadratic support function of harmonic type]{Surfaces with quadratic support function of harmonic type}

\author{Armando M. V. Corro}
\curraddr[Armando M. V. Corro]{Instituto de Matem\'atica e Estat\'istica, Universidade Federal de Goi\^as, 74001-970, Goi\^ania-GO, Brazil.}%
\email[]{avcorro@gmail.com}%
\author{Carlos M. C. Riveros}
\curraddr[Carlos M. C. Riveros]{Departamento de Matem\'atica, Universidade de Bras\'ilia, CEP 70910-900, Bras\'ilia, DF, Brasil.}%
\email[]{carlos@mat.unb.br}%
\author{Jose L. Teruel Carretero}
\curraddr[Jose L. Teruel Carretero]{Departamento de Matem\'aticas, Facultad de Ciencias, Campus de San Vicente, Universidad de Alicante, CP 03690, San Vicente del Raspeig, Alicante, España.}%
\email[]{jose.teruel@ua.es; jlteruel@mat.unb.br}%
%\thanks{Thanks for Author One.}
%\thanks{Thanks for Author Two.}
%\date{March 15, 2002}
\subjclass[2010]{53A35} %
\keywords{Weingarten surfaces, Laguerre minimal surfaces, lines of curvature.}%
%\dedicatory{Dedicated to Professor XY on the occasion of his seventieth birthday.}

\begin{abstract}
In this paper, we study oriented surfaces $S$ in $\mathbb{R}^3$, called Surfaces with quadratic support function of harmonic type (in short HQSF-surfaces), these surfaces generalize the QSF-surfaces. We obtain a Weierstrass type representation for the HQSF-surfaces which depends on three holomorphic functions. Moreover, classify the HQSF-surfaces of rotation.
\end{abstract}
\maketitle
\section{Introduction}
Let $S \subset \mathbb{R}^{3}$ be a surface oriented by its normal Gauss map $N$. The functions $\Psi, \Lambda : S \rightarrow \mathbb{R}^{3}$ given by $\Psi(p) = \langle p, N(p)\rangle,\,\Lambda(p) = \langle p,p\rangle$, $p \in S$, where $\langle ,\rangle$ denotes the Euclidean scalar product in  $\mathbb{R}^{3}$, are called {\em support function} and {\em quadratic distance function}, respectively.

Appell in \cite{1}, studied a class of oriented surfaces in $\mathbb{R}^3$ associated with area preserving transformations in the sphere.
In \cite{6}, the authors showed that these surfaces are such that the mean curvature $H$, the Gaussian curvature $K$ and the support function $\Psi$ satisfy  $H + \Psi K = 0.$ Tzitz\'eica in \cite{11} estudied oriented hyperbolic surfaces such that there exist a nonzero constant $c \in \mathbb{R}$ for which the following relation is satisfied $K + c^{2}\Psi^{4}= 0$.

A surface $S\subset \mathbb{R}^3$ is a {\em Laguerre minimal surface} if
\[
\bigtriangleup_{III}\left(\frac{H}{K}\right)=0,
\]
where $H$ is the mean curvature, $K$ is the Gaussian curvature  and $III$ is the third fundamental form of $S$. Laguerre minimal surfaces have been studied by several authors, for example see \cite{AGL}, \cite{MN1}, \cite{MN2}, \cite{15}, \cite{YS}, \cite{21} and \cite{22}.

In \cite{CR}, the authors classify the Laguerre minimal surfaces with planar curvature lines. In addition, they define the spherical type hypersurfaces, as one of the generalizations of the Laguerre minimal surfaces for larger dimensions.

In \cite{4}, the authors motivated by the works \cite{1} and \cite{11} they introduce the class of oriented surfaces $S$ in $\mathbb{R}^3$
satisfying a relation of the form $A(p) + B(p)H(p) + C(p)K(p) = 0, p \in S,$ where $A, B, C : S \rightarrow \mathbb{R}$  are differentiable
functions depending on the support function and the quadratic distance function. These surfaces are called surfaces of generalized Weingarten surfaces depending on the support and distance functions (in short, DSGW-surfaces). The linear Weingarten surfaces, Appell's surfaces and Tzitzeica's surfaces are all DSGW-surfaces. In this context, the authors study a special class of oriented surfaces $ S\subset \mathbb{R}^{3} $ that satisfy a relation of the form $ 2 \Psi H + \Lambda K = 0 $, this surfaces are called {\em EDSW-surfaces}. They show that these surfaces are invariant by dilations and inversions. Moreover, they obtain a Weierstrass type representation depending on two holomorphic functions.

A generalization of the DSGW-surfaces is to consider the functions $A, B, C : S \rightarrow \mathbb{R}$ depending on the support function, the quadratic distance function and harmonic functions  with respect to the third quadratic form, these surfaces are called of generalized Weingarten surfaces depending on the support and distance functions of harmonic type (in short DSGWH-surfaces). The Laguerre minimal surfaces is a class of the DSGWH-surfaces, since $H$ and $K$ satify
\[
H-\mu K=0,
\]
where $\mu$ is a harmonic function with respect to the third quadratic form.

In \cite{9}, the authors introduce the class of surfaces in Euclidean space motivated by a problem posed by \'{E}lie Cartan. This class of
surfaces are called {\em Ribaucour surfaces} and are defined as surfaces where all the medial spheres intercept a fixed sphere along a large circle, these surfaces satisfy a relation of the form
\[
2\Psi H + (1+ \Lambda) K= 0.
\]
The authors obtain holomorphic data for these surfaces and discuss the relation with minimal surfaces.

In \cite{CRF}, the authors study  the Ribaucour surfaces of harmonic type (in short HR-surfaces), these surfaces satisfy
\[
2 \Psi H + (ce^{2 \mu}+\Lambda ) K = 0,
 \]
where $ c $ is a nonzero real constant, $ \mu $ a harmonic function with respect to the third fundamental form. These surfaces generalize the Ribaucour surfaces studied in \cite{9}.

Corro and Mendez in \cite{CM}, study the Ribaucour-type surfaces (in short RT-surfaces) what satisfy
\[
2\Psi H+(\Lambda+\Psi^2)K=0.
\]
In \cite{CRC}, the authors study the surfaces with quadratic support function (in short QSF-surfaces), these surfaces satisfy
\[
2\Psi H+(\Lambda-\Psi^2)K=0.
\]
They obtain a Weierstrass type representation for the QSF-surfaces which depends on two holomorphic functions.\\

In this paper, we study the surfaces with quadratic support function of harmonic type (in short HQSF-surfaces), these surfaces satisfy
\[
2\Psi H+(c\Psi e^{2\mu}+\Lambda-\Psi^2)K=0,
\]
where $\mu$ is a harmonic function with respect to the third fundamental form.\\
This class of surfaces generalize the QSF-surfaces studied in \cite{CRC}. We obtain a Weierstrass type representation for the HQSF-surfaces which depends on three  holomorphic functions. Moreover, classify the HQSF-surfaces of rotation.

\section{Preliminaries}

In this section we present the definitions and results that will be used in the work. In this paper the inner
produt $\langle ,\rangle:\mathbb{C}\times\mathbb{C} \rightarrow \mathbb{R}$ is defined by
\[
\langle f,g\rangle= f_1g_1+f_2g_2, \,\,\mbox{where} \,\,f=f_1 +if_2, \,\,g=g_1 +ig_2,
\]
are holomorphic functions.\\
In the computation we use the following properties: if $f, g:\mathbb{C} \rightarrow \mathbb{C}$
are holomorphic functions of $z=u_1+iu_2$, then
\[
\langle f,g\rangle_{,1}=\langle f',g\rangle+\langle f,g'\rangle,\quad  \langle f,g\rangle_{,2}=\langle if',g\rangle+\langle f,ig'\rangle,
\]
\begin{equation}
\label{eq1}
\langle f,g\rangle=\langle 1,\bar{f}g\rangle, \quad \langle 1,f\rangle^2=\frac{1}{2}\left[|f|^2+\langle 1,f^2\rangle\right]
\end{equation}
Here $\langle f, g\rangle_{,i}$ denotes the derivative of $\langle f, g\rangle$ with respect to $u_i,\; i = 1, 2$.

The following lemma was obtained in \cite{CRF}.

\begin{lemma}{If $f_1, f_2, g_1, g_2:\mathbb{C} \rightarrow \mathbb{C}$ are holomorphic functions of $z=u_1+iu_2$, such that
$\,\langle f_1,g_1\rangle+\langle f_2,g_2\rangle=0,$ then
\begin{equation}
\label{eq22}
\,\,g_1= ic_1f_1-\overline{z}_1 f_2, \,\,g_2=z_1 f_1 +ic_2 f_2,
\end{equation}
where $c_i$ are real constants and $z_1 \in \mathbb{C}.$}
\end{lemma}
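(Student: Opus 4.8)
The plan is to translate the hypothesis into a statement about a single purely imaginary function and then exploit the rigidity of holomorphic functions by complexifying. Using $\langle f,g\rangle=\langle 1,\bar f g\rangle$, the condition $\langle f_1,g_1\rangle+\langle f_2,g_2\rangle=0$ is equivalent to saying that $H:=\overline{f_1}g_1+\overline{f_2}g_2$ has vanishing real part, i.e. $H+\overline H=0$, which reads
\[
\overline{f_1}\,g_1+\overline{f_2}\,g_2+f_1\,\overline{g_1}+f_2\,\overline{g_2}=0 .
\]
Throughout I will assume $f_1,f_2$ are linearly independent over $\mathbb C$; this is the nondegenerate situation relevant to the application, and if one of them vanishes identically the conclusion genuinely fails and must be adjusted.

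The first main step is the complexification. For a holomorphic $h(z)=\sum a_nz^n$ set $h^*(w)=\sum\overline{a_n}w^n$, so that $\overline{h(z)}=h^*(\bar z)$ with $h^*$ holomorphic. Replacing $\bar z$ by an independent variable $w$, the displayed identity becomes a holomorphic identity in two variables,
\[
f_1^*(w)g_1(z)+f_2^*(w)g_2(z)+g_1^*(w)f_1(z)+g_2^*(w)f_2(z)=0 .
\]
Collecting $u=(f_1,f_2,g_1,g_2)^{T}$, this reads $u^*(w)^{T}Su(z)=0$ for all $z,w$, where $S$ is the symmetric involution interchanging the first and last two coordinates. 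Taking Taylor coefficients and writing $W=\operatorname{span}_{\mathbb C}\{f_1,f_2,g_1,g_2\}\subseteq\mathbb C^4$ for the span of the coefficient vectors (so the $u^*(w)$ span $\overline W$), the identity says $\overline W^{\,T}SW=0$, i.e. $SW\subseteq(\overline W)^{\perp}$ for the standard bilinear form. Since $S$ is invertible and the form is nondegenerate, $\dim W=\dim SW\le 4-\dim\overline W=4-\dim W$, whence $\dim W\le 2$. As $f_1,f_2$ are independent, $W=\operatorname{span}\{f_1,f_2\}$, and therefore $g_1,g_2\in\operatorname{span}_{\mathbb C}\{f_1,f_2\}$.

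The second step is to determine the coefficients. Write $g_1=af_1+bf_2$ and $g_2=cf_1+df_2$ with $a,b,c,d\in\mathbb C$ and substitute into $\operatorname{Re}(\overline{f_1}g_1+\overline{f_2}g_2)=0$. Using $\operatorname{Re}(cf_1\overline{f_2})=\operatorname{Re}(\bar c\,\overline{f_1}f_2)$, this collapses to
\[
\operatorname{Re}(a)\,|f_1|^2+\operatorname{Re}(d)\,|f_2|^2+\operatorname{Re}\!\big((b+\bar c)\overline{f_1}f_2\big)=0 .
\]
I will then invoke that, when $f_1,f_2$ are independent, the four real functions $|f_1|^2,|f_2|^2,\operatorname{Re}(\overline{f_1}f_2),\operatorname{Im}(\overline{f_1}f_2)$ are linearly independent over $\mathbb R$; this follows from the same complexification trick, since $f_1^*,f_2^*$ are then independent as functions of $w$. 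Consequently $\operatorname{Re}(a)=\operatorname{Re}(d)=0$ and $b+\bar c=0$, so $a=ic_1$, $d=ic_2$ with $c_1,c_2\in\mathbb R$ and $c=-\bar b$. Setting $b=-\bar z_1$ (hence $c=z_1$) yields exactly $g_1=ic_1f_1-\bar z_1f_2$ and $g_2=z_1f_1+ic_2f_2$.

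The main obstacle is the passage from a real identity mixing holomorphic and antiholomorphic data to rigid linear-algebraic information: the decisive move is the complexification, which converts ``$\operatorname{Re}(\cdot)=0$'' into a genuine two-variable holomorphic identity and lets a dimension count force $g_1,g_2$ into $\operatorname{span}\{f_1,f_2\}$. Once that containment is secured, the remainder is bookkeeping. The remaining delicate point is the nondegeneracy hypothesis on $f_1,f_2$, which must be supplied by the geometric context or handled as a separate degenerate case.
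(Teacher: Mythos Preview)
The paper does not prove this lemma; it merely quotes it from \cite{CRF}, so there is no in-paper argument to compare against. Your proof is sound and self-contained. The decisive idea---replacing $\bar z$ by an independent variable $w$ so that the real identity $\operatorname{Re}(\overline{f_1}g_1+\overline{f_2}g_2)=0$ becomes the two-variable holomorphic identity $u^*(w)^T S\,u(z)=0$, and then using the dimension count $\dim W\le 4-\dim W$---is exactly what is needed to force $g_1,g_2\in\operatorname{span}_{\mathbb C}\{f_1,f_2\}$. The subsequent determination of the coefficients via the $\mathbb R$-linear independence of $|f_1|^2,|f_2|^2,\operatorname{Re}(\overline{f_1}f_2),\operatorname{Im}(\overline{f_1}f_2)$ is routine and correctly carried out.

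Your caveat about the nondegeneracy hypothesis is well placed: the lemma as stated in the paper is actually false without it (e.g.\ $f_1=1$, $f_2=0$, $g_1=0$, $g_2=e^z$ satisfies the hypothesis but not the conclusion). In the paper's sole application (the proof of Theorem~3) the pair $(f_1,f_2)$ comes from Theorem~2, where the standing condition $|f_1f_2'-f_2f_1'|\neq 0$ guarantees linear independence, so the gap is harmless there---but it is a genuine unstated assumption.
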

The next result was obtained in \cite{4}.

\begin{theorem}{ Let $S$ be a surface with Gaussian curvature $K \neq  0$ and $N$ its normal Gauss
map locally parametrized by
\begin{equation}
\label{eq2}
N(u)=\frac{1}{1+|g|^2}(2 g,1-|g|^2)
\end{equation}
where $\;\Pi = \{(u_1,u_2, u_3)\in \mathbb{R}^3: u_3 = 0\}$, $g:U\rightarrow \Pi$ is a holomorphic function with $|g'| \neq 0$, $U$ is a connected open subset of $\mathbb{R}^2$ and $u=(u_1,u_2) \in U$.
Then there is a differentiable function $h:U \rightarrow \mathbb{R}$, such that $S$ can be locally parametrized by
\begin{equation}
\label{eq3}
\displaystyle X(u)= \left(\frac{g'}{|g'|^2}\nabla h-\frac{2R}{T}g,-\frac{2R}{T}\right),
\end{equation}
where
\begin{equation}
\label{eq4}
\displaystyle  T = 1 + |g|^{2}\;\;\mbox{and}\;\;\displaystyle R = \left < \nabla h,\frac{g}{g'}\right > - h.
\end{equation}
Moreover, the coefficients of the first and the second fundamental form of $X$ are given by
\begin{equation}
\label{eq5}
a_{11}=\frac{|g'|^2}{T^2} [ A_1^2 +  (TV_{12})^2], \;\; a_{12}= -\frac{ |g'|^2}{T}V_{12}[A_1+ A_2], \;\;
a_{22}=\frac{|g'|^2}{T^2} [ A_2^2 +  (TV_{12})^2],
\end{equation}
\begin{equation}
\label{eq6}
b_{11}=\frac{2|g'|^2}{T^2}A_1, \;\; b_{12}= -\frac{2|g'|^2}{T}V_{12},\;\; b_{22}=  \frac{2|g'|^2}{T^2}A_2,\;\;A_i = 2R-TV_{ii},\; i=1,2,
\end{equation}
where
\begin{eqnarray}
\nonumber
V_{11}&=&\frac{1}{|g'|^2}\left[h_{,11}-\left <\frac{g''}{g'},\nabla h\right >\right],\\
\label{eq7}
V_{12}&=&\frac{1}{|g'|^2}\left[h_{,12}-\left <i\frac{g''}{g'}, \nabla h\right >\right],\\
\nonumber
V_{22}&=&\frac{1}{|g'|^2}\left[h_{,22}+\left <\frac{g''}{g'},\nabla h\right >\right].
\end{eqnarray}
The regularity condition of $X$ is given by
\begin{equation}
\label{eq8}
P=(A_1A_2-T^2V_{12}^2)\neq 0.
\end{equation}
The third fundamental form is determined by
\begin{equation}
\label{eq9}
L_{ii}=\langle N_{,i}, N_{,i}\rangle=\frac{4}{T^2}|g'|^2,\;i=1,2,\;\; \langle N_{,1}, N_{,2}\rangle=0.
\end{equation}
\begin{equation}
\label{eq10}
H= -\frac{1}{P}\left(T\frac{\Delta h}{|g'|^2}-4R\right), \quad K= \frac{4}{P}
\end{equation}
Conversely, let be a holomorphic function $g:U\rightarrow \Pi$, $U$, where $U$ is a connected open subset of $\mathbb{R}^{2}$ and a
differentiable function $h:U \rightarrow \mathbb{R}$. Then
(\ref{eq3}) define an immersion in $\mathbb{R}^3$ with Gaussian curvature non-zero, Gauss map given by
(\ref{eq2}) and (\ref{eq5})-(\ref{eq10}) are satisfied.}
\end{theorem}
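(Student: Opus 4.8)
The plan is to treat the Gauss map as the independent variable and to recover the immersion from its support function. Since $K \neq 0$, the map $N$ is a local diffeomorphism onto its image, so the parameters $(u_1,u_2)$ underlying (\ref{eq2}) are genuine local coordinates on $S$, and the hypothesis that $g$ is holomorphic with $|g'|\neq 0$ encodes that $N$ is conformal with respect to the third fundamental form (this is why (\ref{eq2}), the inverse stereographic projection of $g$, is the natural normal field). A direct differentiation of (\ref{eq2}), using the Cauchy--Riemann relations $g_{,1}=g'$ and $g_{,2}=ig'$ together with the identities (\ref{eq1}), yields $\langle N_{,i},N_{,i}\rangle = \frac{4|g'|^2}{T^2}$ and $\langle N_{,1},N_{,2}\rangle=0$; this is exactly (\ref{eq9}) and shows that $\{N_{,1},N_{,2}\}$ is an orthogonal basis of the tangent plane with equal norms.

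Next I would reconstruct $X$. Writing the support function $\Psi=\langle X,N\rangle$ and differentiating, the relation $\langle X_{,i},N\rangle=0$ gives $\Psi_{,i}=\langle X,N_{,i}\rangle$; expanding $X=\Psi N+aN_{,1}+bN_{,2}$ in the orthogonal frame above and solving for $a,b$ produces the classical formula $X=\Psi N+\frac{T^2}{4|g'|^2}(\Psi_{,1}N_{,1}+\Psi_{,2}N_{,2})$. Setting $h=\frac{T}{2}\Psi$ (equivalently $\Psi=\frac{2h}{T}$) and substituting the explicit expressions for $N_{,i}$ obtained above, I would simplify the two planar components and the vertical component separately; the combination $\langle\nabla h,g/g'\rangle-h$ emerges naturally and is abbreviated by $R$, giving precisely (\ref{eq3}). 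The existence of the differentiable $h$ claimed in the statement is then immediate, since $h=\frac{1+|g|^2}{2}\langle X,N\rangle$.

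The fundamental forms are then obtained by direct differentiation of (\ref{eq3}). Computing $X_{,1},X_{,2}$ and forming $a_{ij}=\langle X_{,i},X_{,j}\rangle$ produces (\ref{eq5}) once the quantities $V_{ij}$ of (\ref{eq7}) and $A_i=2R-TV_{ii}$ are introduced to organize the terms; the second fundamental form follows from $b_{ij}=-\langle X_{,i},N_{,j}\rangle$ and gives (\ref{eq6}). With these in hand the curvature formulas (\ref{eq10}) are purely algebraic: one checks that $\det(a_{ij})=\frac{|g'|^4}{T^4}P^2$ and $\det(b_{ij})=\frac{4|g'|^4}{T^4}P$ with $P=A_1A_2-T^2V_{12}^2$, so $K=4/P$, while the trace expression for $H$ collapses to $\frac{A_1+A_2}{P}$, and since $V_{11}+V_{22}=\frac{\Delta h}{|g'|^2}$ this equals $-\frac{1}{P}\big(T\frac{\Delta h}{|g'|^2}-4R\big)$; regularity of $X$ is equivalent to $\det(a_{ij})\neq0$, i.e. to $P\neq0$, which is (\ref{eq8}). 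For the converse I would run the computation in reverse: starting from arbitrary holomorphic $g$ and differentiable $h$, define $X$ by (\ref{eq3}), verify $\langle X_{,i},N\rangle=0$ so that (\ref{eq2}) is indeed its Gauss map, and observe that the same identities produce (\ref{eq5})--(\ref{eq10}), with $K=4/P\neq0$ guaranteed by (\ref{eq8}).

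The main obstacle is entirely computational: the explicit simplification of $N_{,i}$, of $X_{,i}$, and of the inner products $\langle X_{,i},N_{,j}\rangle$ requires repeated use of the Cauchy--Riemann relations and of the algebraic identities (\ref{eq1}) for the real pairing on $\mathbb{C}$, all while keeping careful track of the hybrid notation in which $g$ and $g'$ are simultaneously complex numbers and points of $\Pi$. The appearance of $g''/g'$ inside the $V_{ij}$ is the signature of the delicate cancellations involved; the whole argument hinges on organizing the algebra so that the packaged quantities $R$, $A_i$ and $V_{12}$ surface at the right moments, after which the passage to (\ref{eq5})--(\ref{eq10}) is routine.
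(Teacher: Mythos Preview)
The paper does not actually prove this theorem: it is stated in the Preliminaries as a result taken verbatim from \cite{4} (``The next result was obtained in \cite{4}''), and no proof is given in the present paper. So there is no ``paper's own proof'' to compare your proposal against.

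That said, your outline is the standard and correct strategy for this kind of representation result, and it is essentially the argument carried out in \cite{4}. You correctly identify the key structural points: (i) the inverse stereographic parametrization of $N$ makes the third fundamental form conformal, giving (\ref{eq9}); (ii) the support-function decomposition $X=\Psi N+\frac{T^2}{4|g'|^2}(\Psi_{,1}N_{,1}+\Psi_{,2}N_{,2})$ together with the substitution $h=\tfrac{T}{2}\Psi$ produces (\ref{eq3}) with $R=\langle\nabla h,g/g'\rangle-h$; (iii) the quantities $V_{ij}$ and $A_i$ package the second derivatives of $h$ so that the fundamental-form computations close up as in (\ref{eq5})--(\ref{eq6}); and (iv) the curvature identities follow algebraically from $\det(a_{ij})=\frac{|g'|^4}{T^4}P^2$, $\det(b_{ij})=\frac{4|g'|^4}{T^4}P$, and $A_1+A_2=4R-T\Delta h/|g'|^2$. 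Your assessment that the difficulty is purely computational---tracking the Cauchy--Riemann identities and the real pairing (\ref{eq1}) through the hybrid $\mathbb{C}\simeq\Pi$ notation---is accurate. There is no conceptual gap in what you have written; to turn it into a complete proof you would simply have to carry out the differentiations explicitly, which is lengthy but mechanical.
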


\section{ Surfaces with quadratic support function of harmonic type}
\begin{definition}{\em We say that an oriented surface $S \subset \mathbb{R}^{3}$ is a {\em Surface with quadratic support function of harmonic type (in short, HQSF-surface)} if the mean curvature $H$, the Gaussian curvature $K$, the support function $\Psi$ and quadratic distance $\Lambda$ satisfy
\begin{equation}
\label{eq13}
2\Psi H+(c\Psi e^{2\mu}+\Lambda-\Psi^2)K=0.
\end{equation}}
\end{definition}
The following Theorem provides a Weierstrass type representation for HQSF-surfaces which depends on three  holomorphic functions.
\begin{theorem}{Let $S \subset \mathbb{R}^3$ be a connected orientable Riemann surface. Then $S$ is a HQSF-surface if,
and only if, there exist holomorphic functions $g, f_1, f_2: S \rightarrow \mathbb{C}_{\infty}$, with $|f_1f'_2-f_2f'_1|\neq 0$ such that $X(S)$ is locally parametrized by
\begin{eqnarray}
\nonumber
X(z)&=& \left(\frac{2(|f_1|^2+c|f_2|^2)}{T^2}\left(2T\left(f_1\overline{\left(\frac{f_1'}{g'}\right)}+cf_2\overline{\left(\frac{f_2'}{g'}\right)}\right)-g\left(|f_1|^2+c|f_2|^2\right)\right),0 \right)\\
\label{eq14}
&&-\frac{2R}{T}(g,1),
\end{eqnarray}
where
\begin{equation}
\label{eq15}
R=\frac{(|f_1|^2+c|f_2|^2)}{T^2}\left( 4T\left\langle 1,\frac{(\overline{f_1}f_1'+c\overline{f_2}f_2')g}{g'}\right\rangle-(3|g|^2+1)(|f_1|^2+c|f_2|^2)\right),
\end{equation}
$c \in \mathbb{R}, |g'|\neq 0,\,T=1+|g|^2$.}
\end{theorem}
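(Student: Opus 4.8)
The plan is to exploit the general representation from the earlier theorem (equations \eqref{eq2}--\eqref{eq10}) and to reduce the HQSF equation \eqref{eq13} to a condition that forces the function $h$ to take a specific form built from the two new holomorphic functions $f_1,f_2$. First I would substitute the formulas \eqref{eq10} for $H$ and $K$, together with $\Psi=\langle X,N\rangle$ and $\Lambda=\langle X,X\rangle$ computed from \eqref{eq3}, into \eqref{eq13}. Since $K=4/P\neq 0$, after multiplying through by $P/K$ the whole relation collapses to a single scalar PDE of the form
\begin{equation}
\label{eqplan1}
-\frac{T}{2}\,\frac{\Delta h}{|g'|^2}+2R+c\,\Psi\, e^{2\mu}+\Lambda-\Psi^2=0,
\end{equation}
where I expect $\Psi$ and $\Lambda$ to simplify substantially in terms of $R$, $T$, $g$ and $\nabla h$. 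The key preparatory computation is therefore to express $\Psi=-\tfrac{2R}{T}\langle(g,1),N\rangle$-type quantities and $\Lambda$ explicitly; using \eqref{eq2}, \eqref{eq3} and \eqref{eq4} one finds $\Psi=-2R/T$ (up to the standard normalization), so that the quadratic combination $\Lambda-\Psi^2$ and the support term recombine neatly.

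Next I would read \eqref{eqplan1} as a second-order linear PDE for $h$ with the inhomogeneous harmonic term $c\,\Psi\,e^{2\mu}$. The natural ansatz, suggested by the target formulas \eqref{eq14}--\eqref{eq15}, is to look for a solution of the shape $h=\langle 1,\,F\rangle$ where $F$ is a holomorphic expression quadratic in $f_1,f_2$ and their $g$-derivatives. Concretely I would posit that $R$ has the announced form \eqref{eq15}, differentiate it to obtain $\nabla h$ from \eqref{eq4} (inverting the relation $R=\langle\nabla h,g/g'\rangle-h$), and then verify that $\Delta h/|g'|^2$ produced this way satisfies \eqref{eqplan1} identically. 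The harmonic function $\mu$ enters through $e^{2\mu}=|f_1|^2+c|f_2|^2$ (or a closely related modulus), which is exactly the recurring factor in \eqref{eq14} and \eqref{eq15}; checking that $\mu=\tfrac12\log(|f_1|^2+c|f_2|^2)$ is harmonic with respect to the third fundamental form \eqref{eq9} is a short computation using holomorphicity of $f_1,f_2$.

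For the converse direction I would run the computation backwards: starting from the explicit $R$ in \eqref{eq15}, recover $h$ and $\nabla h$, insert them into the representation \eqref{eq3} to obtain \eqref{eq14}, and then invoke the converse part of the earlier theorem to guarantee that \eqref{eq14} is an immersion with $K\neq0$ and Gauss map \eqref{eq2}. The regularity condition \eqref{eq8} becomes the stated nondegeneracy $|f_1f_2'-f_2f_1'|\neq0$, which I would verify by computing $P=A_1A_2-T^2V_{12}^2$ in terms of the Wronskian-type quantity $f_1f_2'-f_2f_1'$. Finally, substituting the recovered $H,K,\Psi,\Lambda$ back confirms that \eqref{eq13} holds, closing the equivalence.

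The main obstacle I anticipate is the bookkeeping in passing between $h$ and $R$: the representation is written in terms of $R$ and $\nabla h$, but the PDE \eqref{eqplan1} involves $\Delta h$, so I must carefully compute the second derivatives $V_{11},V_{22}$ from \eqref{eq7} for the quadratic ansatz and show the harmonic-type term $c\Psi e^{2\mu}$ matches exactly. This requires systematic use of the bracket identities \eqref{eq1} and of Lemma 1, particularly the identity $\langle 1,f\rangle^2=\tfrac12[|f|^2+\langle1,f^2\rangle]$, to collapse products of the real-part functionals into the clean combinations appearing in \eqref{eq14}--\eqref{eq15}. Getting the constant $c$ and the factor of $3|g|^2+1$ in \eqref{eq15} to emerge correctly is where the computation is most delicate, and I would organize it by isolating the coefficient of each independent monomial in $g,\bar g$ before comparing the two sides.
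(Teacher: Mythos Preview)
Your plan contains a concrete error that would derail the computation. First, the identification $\Psi=-2R/T$ is wrong: from \eqref{eq2}--\eqref{eq4} one gets $\Psi=\langle X,N\rangle=2h/T$, while $R=\langle\nabla h,g/g'\rangle-h$ is a different quantity. More seriously, your candidate $\mu=\tfrac12\log(|f_1|^2+c|f_2|^2)$ is \emph{not} harmonic: a direct calculation gives
\[
\Delta\log\bigl(|f_1|^2+c|f_2|^2\bigr)=\frac{4c\,|f_1f_2'-f_2f_1'|^2}{(|f_1|^2+c|f_2|^2)^2},
\]
which vanishes only when $c=0$ or the Wronskian does. The actual harmonic function is $\mu=\log\bigl(2|f_1f_2'-f_2f_1'|/|g'|\bigr)$, the log-modulus of a meromorphic function. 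Your ansatz $h=\langle 1,F\rangle$ with $F$ holomorphic and quadratic in $f_1,f_2$ is also off target: the correct $h$ turns out to be $(|f_1|^2+c|f_2|^2)^2/T$, which is quartic in the $f_i$ and not the real part of a holomorphic function. Finally, Lemma~1 plays no role here; it is used only later for the rotational classification.

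The step you are missing is a substitution that linearizes the problem. After inserting $\Psi=2h/T$ and $\Lambda=|\nabla h|^2/|g'|^2-4Rh/T$ into \eqref{eq13}, the HQSF condition becomes an equation in the combination $h\Delta h-|\nabla h|^2$. Writing $h=A^2/T$ for an auxiliary real function $A$ and using the identity
\[
h\Delta h-|\nabla h|^2=\frac{2A^2}{T^2}\bigl(A\Delta A-|\nabla A|^2\bigr)-\frac{A^4}{T^4}\bigl(T\Delta T-|\nabla T|^2\bigr),
\]
together with $T\Delta T-|\nabla T|^2=4|g'|^2$, reduces everything to the single equation $(A\Delta A-|\nabla A|^2)/|g'|^2=c\,e^{2\mu}$. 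Taking $A=|f_1|^2+c|f_2|^2$ then yields $A\Delta A-|\nabla A|^2=4c|f_1f_2'-f_2f_1'|^2$, which simultaneously determines $\mu$ and, after computing $\nabla h$ and substituting into \eqref{eq3}, gives \eqref{eq14}--\eqref{eq15} directly. Without this two-step reduction the PDE you wrote does not visibly factor, and the brute-force matching of coefficients you propose would not close.
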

\begin{proof} 
From  Theorem 1, we have that
\begin{equation}
\label{eq16}
\frac{H}{K}=-\frac{1}{4}\left(\frac{T\Delta h}{|g'|^2}-4R\right),
\end{equation}
\begin{equation}
\label{eq17}
  \Psi=\frac{2h}{T},\,\,\Lambda=\frac{|\nabla h|^2}{|g'|^2}-4R\frac{h}{T},\,L_{11}=\frac{4|g'|^2}{T^2},\,T=1+|g|^2.
\end{equation}
Using (\ref{eq17}) in (\ref{eq16}) we obtain
\begin{equation}
\label{eq18}
\frac{H}{K}=-\frac{1}{4}\left(\frac{T\Delta h}{|g'|^2}+\frac{T}{h}\left(\Lambda-\frac{|\nabla h|^2}{|g'|^2}\right)\right)=-\frac{1}{2\Psi}\left(\frac{h\Delta h-|\nabla h|^2}{|g'|^2}+\Lambda\right).
\end{equation}
Now, let $h=\displaystyle\frac{A^2}{T}$ where $A$ is a differentiable function.\\
We can show that
\begin{equation}
\label{eq20}
h\Delta h-|\nabla h|^2=\frac{2A^2}{T^2}\left(A\Delta A-|\nabla A|^2\right)-\frac{A^4}{T^4}\left(T\Delta T-|\nabla T|^2\right).
\end{equation}
Considering (\ref{eq17}) we obtain
\begin{equation}
\label{eq21}
  T\Delta T-|\nabla T|^2=4|g'|^2,\,\, \Psi=\frac{2A^2}{T^2}.
\end{equation}
Using (\ref{eq20}) and (\ref{eq21}) in (\ref{eq18}) 
\begin{equation}
\label{eq22}
  \frac{H}{K}=-\frac{1}{2\Psi}\left(\Psi\left(\frac{A\Delta A-|\nabla A|^2}{|g'|^2}\right)+\Lambda-\Psi^2\right).
\end{equation}
From (\ref{eq13}) and (\ref{eq22}) $X$ is a HQSF-surface if and only if  
\begin{equation}
\label{eq23}
\frac{A\Delta A-|\nabla A|^2}{|g'|^2}=ce^{2\mu}.
\end{equation}
Now, considering $A=|f_1|^2+c|f_2|^2$, where $f_1$ and $f_2$ are holomorphic functions, we obtain  
\begin{equation}
\label{eq24}
A\Delta A-|\nabla A|^2=4c|f_1f'_2-f_2f'_1|^2.
\end{equation}
From (\ref{eq23})  and (\ref{eq24}) it follows
\begin{equation}
\label{eq19}
\mu=\ln \left(\frac{2|f_1f'_2-f_2f'_1|}{|g'|}\right).
\end{equation}
Therefore, the function $h$ is given by
\begin{equation}
\label{eq25}
h=\frac{(|f_1|^2+c|f_2|^2)^2}{1+|g|^2}.
\end{equation}
Using (\ref{eq25}) we obtain
\begin{equation}
\label{eq26}
\nabla h=2h\left(\frac{2(f_1\overline{f_1'}+cf_2\overline{f_2'})}{|f_1|^2+c|f_2|^2}-\frac{g\overline{g'}}{1+|g|^2}\right).
\end{equation}
Using (\ref{eq26}) in (\ref{eq3}) we obtain  (\ref{eq14}). The proof is complete.
\end{proof}

\begin{example} {\em Considering $f_1(z)=z, f_2(z)=e^z, g(z)=z^2$ and $c=-1$ in Theorem 2, we obtain the HQSF-surface
\begin{figure}[h!]
 \centering
       \begin{minipage}[b]{8.5cm}
        \includegraphics[width=8.5cm]{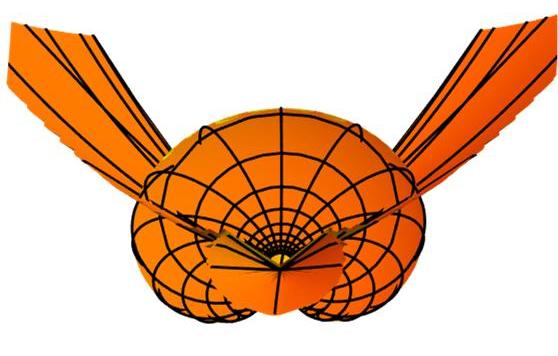}
       \caption{HQSF-surface}
      \end{minipage}
      \end{figure}}
 \end{example}
\newpage
\begin{example} {\em Considering $f_1(z)=\cosh z, f_2(z)=\sinh z, g(z)=z^3$ and $c=2$ in Theorem 2, we obtain the HQSF-surface

\begin{figure}[h!]
 \centering
       \begin{minipage}[b]{10.5cm}
        \includegraphics[width=10.5cm]{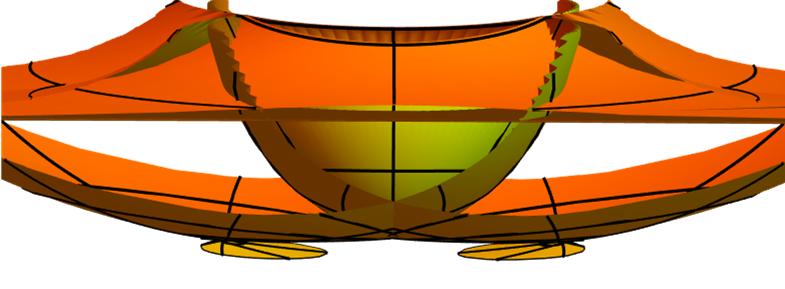}
       \caption{HQSF-surface}
      \end{minipage}
      \end{figure}}
 \end{example}

\begin{example} {\em Considering $f_1(z)=z^3, f_2(z)=z^4, g(z)=z^5$ and $c=-1$ in Theorem 2, we obtain the HQSF-surface

\begin{figure}[h!]
 \centering
       \begin{minipage}[b]{7.0cm}
        \includegraphics[width=5.5cm]{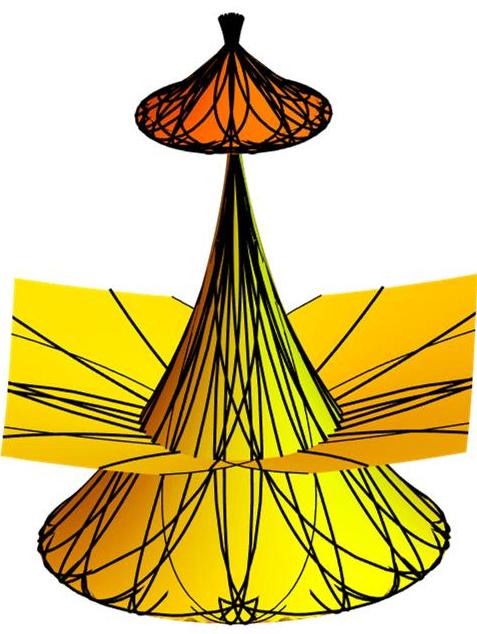}
       \caption{HQSF-surface}
      \end{minipage}
      \end{figure}}
 \end{example}
 \newpage
 \begin{example} {\em Considering $f_1(z)=\sin z, f_2(z)=\cos z, g(z)=e^{2z}$ and $c=-1$ in Theorem 2, we obtain the HQSF-surface

 \begin{figure}[h!]
 \centering
       \begin{minipage}[b]{7.0cm}
        \includegraphics[width=4.9cm]{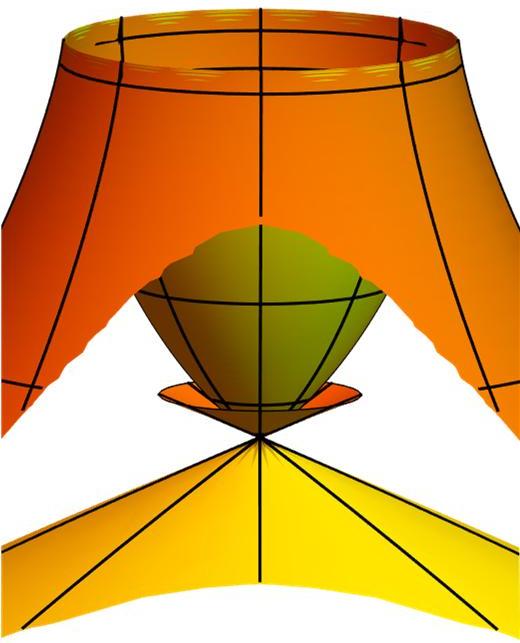}
       \caption{HQSF-surface}
      \end{minipage}
      \end{figure}}
 \end{example}
 \newpage
\begin{example} {\em Considering $f_1(z)=e^z, f_2(z)=-e^{-z}, g(z)=z$ and $c=-1$ in Theorem 2, we obtain the HQSF-surface

 \begin{figure}[h!]
 \centering
       \begin{minipage}[b]{7.0cm}
        \includegraphics[width=4.9cm]{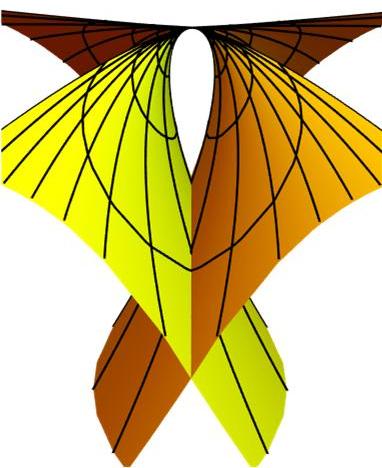}
       \caption{HQSF-surface}
      \end{minipage}
      \end{figure}}
 \end{example}

The following Theorem classify the HQSF-surfaces of rotation.\\

\begin{theorem} An oriented connected surface $S$ with nonzero Gauss curvature is a HQSF-surface of rotation if, and only if, locally can be parameterized by
\begin{equation}
\label{eqr1}
X_i(u)=(A_i(u_1)\cos u_2, A_i(u_1)\sin u_2,B_i(u_1)), i=1,2,3.
 \end{equation}
 where
\begin{eqnarray}
\label{eqr2}
A_i(u_1)&=&\frac{(1-e^{2u_1})h_i'(u_1)+2e^{2u_1}h_i(u_1)}{e^{u_1}(1+e^{2u_1})},\\
\label{eqr3}
B_i(u_1)&=&-\frac{2(h'_{i}(u_1)-h_i(u_1))}{1+e^{2u_1}},
\end{eqnarray}
\begin{equation}
\label{eqr121}
h_1(u_1)=\frac{e^{\frac{2\left(c c_1+c_2-\sqrt{\Omega}\right)u_1}{c}} \left(a_1^2 \left(\Omega+ (cc_1-c_2)\sqrt{\Omega}\right)+a_2^2 e^{\frac{2 \sqrt{\Omega} u_1}{c}} \left(\Omega-(c c_1-c_2)\sqrt{\Omega}\right)\right)^2}{4 c^2 \left(e^{2 u_1}+1\right)|z_1|^4},
\end{equation}
\begin{eqnarray}
\nonumber
h_2(u_1)&=&\frac{\Omega e^{2\left(\frac{c_2}c+c_1\right)u_1}}{16 c^2 \left(e^{2 u_1}+1\right)|z_1|^4} \left(\left(\left(b_1^2-b_2^2\right) (c c_1-c_2)+2 b_1 b_2 \sqrt{\Omega}\right)\sin \left(\frac{\sqrt{\Omega} u_1}{c}\right) \right.\\
\label{eqr141}
&& \left.+\left( \left(b_1^2-b_2^2\right)\sqrt{\Omega}-2 b_1 b_2 (c c_1-c_2)\right)\cos \left(\frac{\sqrt{\Omega} u_1}{c}\right) \right)^2,
\end{eqnarray}
\begin{equation}
\label{eqr161}
h_3(u_1)=\frac{e^{2\left(\frac{c_2}{c}+c_1\right)u_1} (a_2 ((a_1+a_2 u_1) (c_2-c c_1)+a_2 c))^2}{\left(e^{2 u_1}+1\right)|z_1|^4 }.
\end{equation}
\end{theorem}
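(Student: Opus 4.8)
The plan is to exploit the Weierstrass representation of Theorem 2, specialized so that the Gauss map is invariant under rotations about the $x_3$-axis. By Theorem 2 a HQSF-surface is governed by holomorphic data $g,f_1,f_2$, so I would first argue that for a surface of rotation this data can, after an isometry and a reparametrization, be put into the gauge $g(z)=e^{z}$. Indeed the Gauss map (\ref{eq2}) is rotation-covariant in the sense that a rotation of angle $\theta$ about the $x_3$-axis corresponds to $g\mapsto e^{i\theta}g$; requiring $u_2$ to be the rotation angle forces $|g|$ to depend only on $u_1$ and $\arg g=u_2$, and the only holomorphic $g$ with this property is $g=e^{z}$ up to a real affine change of $u_1$ and a translation of $u_2$. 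With this gauge $T=1+e^{2u_1}$, $|g'|^2=e^{2u_1}$ and $g/g'=1$.

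Next I would impose that $h$ depend only on $u_1$, which is exactly the requirement that $X$ be a surface of rotation. Substituting $g=e^{z}$ and $h=h(u_1)$ into (\ref{eq3})--(\ref{eq4}) and writing the first two coordinates in polar form, a direct computation gives $X=(A(u_1)\cos u_2,A(u_1)\sin u_2,B(u_1))$ with $A$ and $B$ exactly as in (\ref{eqr2})--(\ref{eqr3}); here one uses $\nabla h=(h',0)$, $R=h'-h$ and $-2R/T=B$. This establishes the shape (\ref{eqr1}) and reduces the theorem to determining the admissible profile functions $h$.

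The core step is to characterize those $h$ that actually arise from an HQSF-surface. By the construction in the proof of Theorem 2, $h=\mathcal{A}^2/T$ with $\mathcal{A}=|f_1|^2+c|f_2|^2$, and (\ref{eq23})--(\ref{eq24}) show that the HQSF condition is automatic once $\mu$ is taken as in (\ref{eq19}); hence the only requirement left is that $\mathcal{A}$ depend on $u_1$ alone, i.e. $\partial_{u_2}\mathcal{A}=0$. Computing this derivative yields $\operatorname{Im}(\overline{f_1}f_1'+c\,\overline{f_2}f_2')=0$, which in the notation of the Preliminaries reads $\langle if_1,f_1'\rangle+\langle icf_2,f_2'\rangle=0$. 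Applying the Lemma with $(F_1,G_1)=(if_1,f_1')$ and $(F_2,G_2)=(icf_2,f_2')$ converts this single orthogonality relation into the constant-coefficient linear system
\[
f_1'=-c_1f_1-ic\overline{z_1}f_2,\qquad f_2'=iz_1f_1-cc_2f_2,
\]
with $c_1,c_2\in\mathbb{R}$ and $z_1\in\mathbb{C}$. Thus every rotational HQSF-surface is governed by this $2\times2$ system, and conversely each solution produces one.

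Finally I would integrate the system. Its coefficient matrix has characteristic polynomial $\lambda^2+(c_1+cc_2)\lambda+c(c_1c_2-|z_1|^2)=0$, whose discriminant is $\Omega=(c_1-cc_2)^2+4c|z_1|^2$ (up to the labelling of $c_1,c_2$). The three cases $\Omega>0$, $\Omega<0$ and $\Omega=0$ give, respectively, two real exponential modes, a complex-conjugate pair (producing the trigonometric factors, and requiring $c<0$), and a repeated eigenvalue with a polynomial factor; substituting the corresponding $f_1,f_2$ into $\mathcal{A}=|f_1|^2+c|f_2|^2$ and then $h=\mathcal{A}^2/T$ yields precisely (\ref{eqr121}), (\ref{eqr141}) and (\ref{eqr161}), the constants $a_i$ and $b_i$ being the amplitudes of the eigenmodes. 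For the converse one checks that each listed $h_i$ is of the form $\mathcal{A}^2/T$ with $\mathcal{A}$ a $u_1$-function arising from the system, so that (\ref{eq13}) holds and the regularity $|f_1f_2'-f_2f_1'|\neq0$ is met. I expect the main obstacle to be this last explicit integration: carrying out the case analysis and, above all, algebraically simplifying $|f_1|^2+c|f_2|^2$ and $\mathcal{A}^2/T$ into the compact closed forms quoted---keeping track of the eigenvector amplitudes and of the exact expression for $\Omega$---is where essentially all the work lies, whereas the reduction to the linear system is short.
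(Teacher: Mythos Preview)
Your proposal is correct and follows essentially the same route as the paper: both set $g(z)=e^{z}$, reduce the rotational condition to $\partial_{u_2}\bigl(|f_1|^2+c|f_2|^2\bigr)=0$, invoke the Lemma to obtain a constant-coefficient linear system for $(f_1,f_2)$, and then split into the three cases according to the sign of the discriminant $\Omega$. The only cosmetic differences are that the paper applies the Lemma with the pair $(f_j,\,if_j')$ rather than $(if_j,\,f_j')$ (a relabelling of $c_1,c_2,z_1$, as you note) and eliminates $f_2$ to work with the second-order ODE $cf_1''-(cc_1+c_2)f_1'+(c_1c_2-|z_1|^2)f_1=0$ instead of integrating the $2\times2$ system directly.
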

\begin{proof}
Note that taking $g(w) = w, w \in \mathbb{C}$, $S$ is a HQSF-surface of rotation if, and only if, $h$ is a radial function i.e., $h(w) = r(|w|)$ for any differentiable function $r$.\\
Making the change of parameters 
\[
w = e^z,\, z = u_1 + iu_2 \in \mathbb{C},
\]
we have that $g(z)= e^z$ and $h_{,2} = 0$.\\
From (\ref{eq25}) and $h_{,2}=0$ we obtain 
\begin{equation}
\label{eqr4}
\langle f_1, i f_1'\rangle+ \langle f_2, i c f_2'\rangle=0. 
\end{equation}
We observed that when $c=0$, we obtain the QSF-surfaces of rotation studied in \cite{CRC}.\\
For $c\neq 0$, using the Lemma 1, we obtain
\begin{eqnarray}
\label{eqr5}
i f_1'&=&i c_1f_1-\overline{z_1}f_2,\\
\label{eqr6}
i c f_2'&=&z_1f_1+ic_2f_2. 
\end{eqnarray}
From (\ref{eqr5}) we obtain
\begin{equation}
\label{eqr7}
f_2=\frac{1}{\overline{z_1}}(i c_1f_1-f_1').
\end{equation}
Differentiating (\ref{eqr7})
\begin{equation}
\label{eqr8}
f_2'=\frac{1}{\overline{z_1}}(i c_1f_1'-f_1'').
\end{equation}
Substituting (\ref{eqr7}) and (\ref{eqr8}) in (\ref{eqr6})
\begin{equation}
\label{eqr9}
cf_1''-(c c_1+c_2)f_1'+(c_1c_2-|z_1|^2)f_1=0.
\end{equation}
The roots of the characteristic equation of equation (\ref{eqr9}) are given by
\begin{equation}
\label{eqr10}
\lambda=\frac{c c_1+c_2\pm \sqrt{\Omega}}{2c},
\end{equation}
where $\Omega= (cc_1-c_2)^2+4c|z_1|^2$.\\

{\bf CASE 1}: If $\Omega>0$ then the roots are given by
\[
\lambda_1=\frac{c c_1+c_2-\sqrt{\Omega}}{2c},\,\,\lambda_2=\frac{c c_1+c_2+\sqrt{\Omega}}{2c}.
\]
Thus
\begin{equation}
\label{eqr11}
f_1(z)=e^{\frac{(c c_1+c_2)}{2c}z}\left(a_1e^{-\frac{\sqrt{\Omega}}{2c}z}+a_2e^{\frac{\sqrt{\Omega}}{2c}z}\right).
\end{equation}
Using (\ref{eqr11}) in (\ref{eqr7})
\begin{equation}
\label{eqr12}
f_2(z)=\frac{i e^{\frac{(c c_1+c_2)}{2c}z}}{2c\overline{z_1}}\left(a_1(c c_1-c_2+\sqrt{\Omega})e^{-\frac{\sqrt{\Omega}}{2c}z}+a_2(cc_1-c_2-\sqrt{\Omega})e^{\frac{\sqrt{\Omega}}{2c}z}\right).
\end{equation}
From (\ref{eq25}), (\ref{eqr11}) and (\ref{eqr12}) we have (\ref{eqr121}).\\

{\bf CASE 2}: If $\Omega<0$ then the roots are given by
\[
\lambda_1=\frac{c c_1+c_2-i\sqrt{-\Omega}}{2c},\,\,\lambda_2=\frac{c c_1+c_2+i\sqrt{-\Omega}}{2c}.
\]
Thus
\begin{equation}
\label{eqr13}
f_1(z)=e^{\frac{(c c_1+c_2)}{2c}z}\left(b_1\cos{\left(\frac{\sqrt{-\Omega}}{2c}z\right)}+b_2\sin{\left(\frac{\sqrt{-\Omega}}{2c}z\right)}\right).
\end{equation}
Using (\ref{eqr13}) in (\ref{eqr7})
\begin{equation}
\label{eqr14}
f_2(z)=\frac{i e^{\frac{(c c_1+c_2)}{2c}z}}{2c\overline{z_1}}\left((b_1(c c_1-c_2)-b_2\sqrt{-\Omega})\cos\left({\frac{-\sqrt{\Omega}}{2c}z}\right)+(b_2(cc_1-c_2)+b_1\sqrt{-\Omega})\sin\left({\frac{\sqrt{-\Omega}}{2c}z}\right)\right).
\end{equation}
From (\ref{eq25}), (\ref{eqr13}) and (\ref{eqr14}) we have (\ref{eqr141}).\\

{\bf CASE 3}: If $\Omega=0$ then the roots are given by
\[
\lambda_1=\lambda_2=\frac{c c_1+c_2}{2c}.
\]
Thus
\begin{equation}
\label{eqr15}
f_1(z)=e^{\frac{(c c_1+c_2)}{2c}z}\left(a_1+a_2 z\right).
\end{equation}
Using (\ref{eqr15}) in (\ref{eqr7})
\begin{equation}
\label{eqr16}
f_2(z)=\frac{i e^{\frac{(c c_1+c_2)}{2c}z}}{2c\overline{z_1}}\left((cc_1-c_2)(a_1+a_2z)-2a_2c\right).
\end{equation}
From (\ref{eq25}), (\ref{eqr15}) and (\ref{eqr16}) we have (\ref{eqr161}).\\

From (\ref{eqr121}), (\ref{eqr141}), (\ref{eqr161}) and (\ref{eq14}), we obtain that $X$ is defined by (\ref{eqr1})-(\ref{eqr3}). The proof is complete

\end{proof}
\begin{example} {\em Considering $a_1=2, a_2=-\frac{1}{3}, c=1, c_1=0, c_2=1, z_1=-\frac{1}{2}$ in Theorem 3, we obtain 
\[
X_1(u_1)=(A_1(u_1)\cos u_2, A_1(u_1)\sin u_2,B_1(u_1)),
\]
and the profile curve is given by
\[
\alpha(u_1)=\left( A_1(u_1),0,B_1(u_1)\right),
\]
where
\begin{eqnarray*}
A_1(u_1)&=&\frac{8 e^{(1-2 \sqrt{2}) u_1} \left(\left(\sqrt{2}+2\right) e^{2 \sqrt{2} u_1}-36\sqrt{2}+72\right)}{81 \left(e^{2 u_1}+1\right)^3} \left(36 \sqrt{2} e^{4 u_1}-\sqrt{2} e^{2 \left(\sqrt{2}+2\right)u_1}\right.\\
&& \left.+\left(3 \sqrt{2}+4\right) e^{2 \sqrt{2} u_1}-108\sqrt{2}+144\right),\\
B_1(u_1)&=&\frac{8 e^{-2 \left(\sqrt{2}-1\right) u_1} \left(\left(\sqrt{2}+2\right) e^{2 \sqrt{2} u_1}-36\sqrt{2}+72\right)}{81 \left(e^{2 u_1}+1\right)^3} \left(36 \left(3 \sqrt{2}-2\right) e^{2 u_1}\right.\\
&&\left.-\left(3 \sqrt{2}+2\right) e^{2 \left(\sqrt{2}+1\right) u_1}-\left(5 \sqrt{2}+6\right) e^{2 \sqrt{2} u_1}-36 \left(6-5 \sqrt{2}\right)\right).
\end{eqnarray*}
The profile curve is regular, but it intersects the axis of rotation at two points, therefore, the HQSF-surface of rotation has two isolated singularities (see Figures 1 and 2).
\begin{figure}[h!]
 \centering
       \begin{minipage}[b]{6.0cm}
        \includegraphics[width=2.4cm]{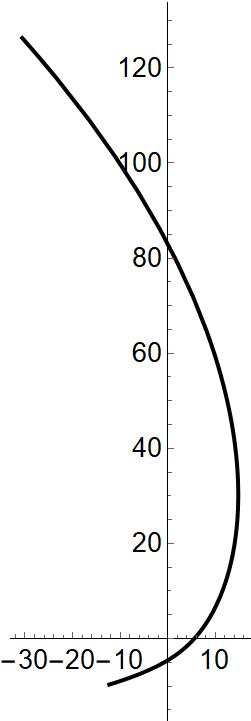}
       \caption{Profile curve}
      \end{minipage}
        \begin{minipage}[b]{6.5cm}
       \includegraphics[width=2.9cm]{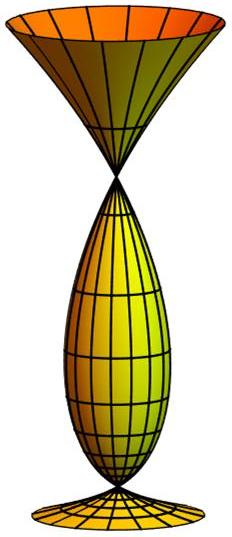}
      \caption{\smash{HQSF-surface of rotation}}
      \end{minipage}
      \end{figure}}
 \end{example}
 \begin{example} {\em Considering $a_1=1, a_2=-1, c=2, c_1=0, c_2=-\displaystyle\frac{1}{2}, z_1=\displaystyle\frac{i}{3}$ in Theorem 3, we obtain
  \[
X_1(u_1)=(A_1(u_1)\cos u_2, A_1(u_1)\sin u_2,B_1(u_1)),
\]
and the profile curve is given by
\[
\alpha(u_1)=\left( A_1(u_1),0,B_1(u_1)\right),
\]
where
\begin{eqnarray*}
A_1(u_1)&=&\frac{41 e^{-\frac{1}{6} \left(\sqrt{41}+9\right) u_1}}{384 \left(e^{2 u_1}+1\right)^3} \left(-48 e^{\frac{\sqrt{41} u_1}{6}}+432 e^{\frac{1}{6} \left(\sqrt{41}+24\right) u_1}+\left(17 \sqrt{41}-99\right) e^{\frac{\sqrt{41} u_1}{3}}\right.\\
&&\left.+\left(53 \sqrt{41}+399\right) e^{4 u_1}+\left(399-53 \sqrt{41}\right) e^{\frac{1}{3} \left(\sqrt{41}+12\right) u_1}-17 \sqrt{41}-99\right),\\
B_1(u_1)&=&\frac{41 e^{-\frac{u_1}{2}}}{48 \left(e^{2 u_1}+1\right)^3} \left(84 e^{2 u_1}-\sqrt{41} \left(22 e^{2 u_1}+13\right) \sinh \left(\frac{\sqrt{41} u_1}{6}\right)\right.\\
&&\left. +3 \left(54 e^{2 u_1}+29\right) \cosh \left(\frac{\sqrt{41} u_1}{6}\right)+36\right).
\end{eqnarray*}
The profile curve is regular, but it intersects the axis of rotation in one point, therefore, the HQSF-surface of rotation has an isolated singularity (see Figures 3 and 4).
\begin{figure}[h!]
 \centering
       \begin{minipage}[b]{6.0cm}
        \includegraphics[width=1.7cm]{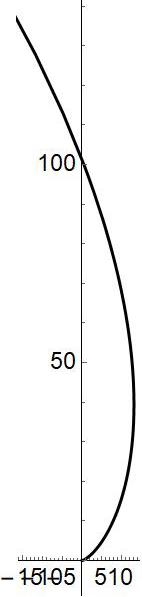}
       \caption{Profile curve}
      \end{minipage}
        \begin{minipage}[b]{6.5cm}
       \includegraphics[width=2.8cm]{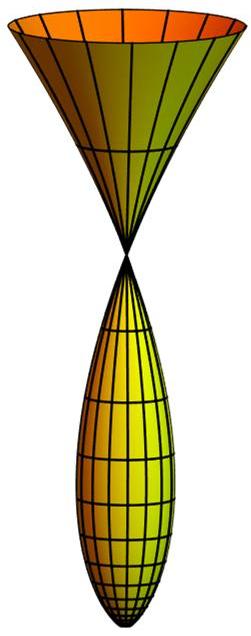}
      \caption{\smash{HQSF-surface of rotation}}
      \end{minipage}
      \end{figure}}
 \end{example}
 \begin{example} {\em Considering  $a_1=1, a_2=1, c=6, c_1=-1, c_2=\displaystyle\frac{1}{2}, z_1=3-\displaystyle\frac{i}{3}$ in Theorem 3, we obtain
  \[
X_1(u_1)=(A_1(u_1)\cos u_2, A_1(u_1)\sin u_2, B_1(u_1)),
\]
and the profile curve is given by
\[
\alpha(u_1)=\left( A_1(u_1),0,B_1(u_1)\right),
\]
where
\begin{eqnarray*}
A_1(u_1)&=&\frac{3131 e^{-\frac{1}{18} \left(\sqrt{9393}+51\right) u_1}}{7746048 \left(e^{2 u_1}+1\right)^3} \left(137760 e^{\frac{1}{18} \left(\sqrt{9393}+72\right) u_1}-43296 e^{\frac{1}{6} \sqrt{\frac{3131}{3}} u_1}\right.\\
&&\left.+\left(227 \sqrt{9393}+34443\right) e^{4 u_1}+\left(695 \sqrt{9393}+31041\right) e^{\frac{1}{3} \sqrt{\frac{3131}{3}} u_1}\right.\\
&&\left.+\left(34443-227 \sqrt{9393}\right) e^{\frac{1}{9} \left(\sqrt{9393}+36\right) u_1}-695 \sqrt{9393}+31041\right),\\
B_1(u_1)&=&\frac{3131 e^{-\frac{11 u_1}{6}}}{968256 \left(e^{2 u_1}+1\right)^3} \left(984 \left(29 e^{2 u-1}+17\right)-\sqrt{9393} \left(172 e^{2 u_1}+289\right)\times\right.\\
&&\left. \sinh \left(\frac{1}{6} \sqrt{\frac{3131}{3}} u_1\right)+9 \left(1004 e^{2 u_1}-815\right) \cosh \left(\frac{1}{6} \sqrt{\frac{3131}{3}} u_1\right)\right).
\end{eqnarray*}
The profile curve is not regular only in one point, therefore, the HQSF-surface of rotation has one isolated singularity and a circle of singularities (see Figures 5 and 6).
\begin{figure}[h!]
 \centering
       \begin{minipage}[b]{6.0cm}
        \includegraphics[width=1.8cm]{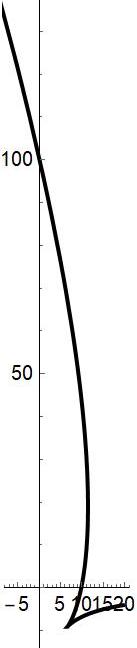}
       \caption{Profile curve}
      \end{minipage}
        \begin{minipage}[b]{6.5cm}
       \includegraphics[width=2.3cm]{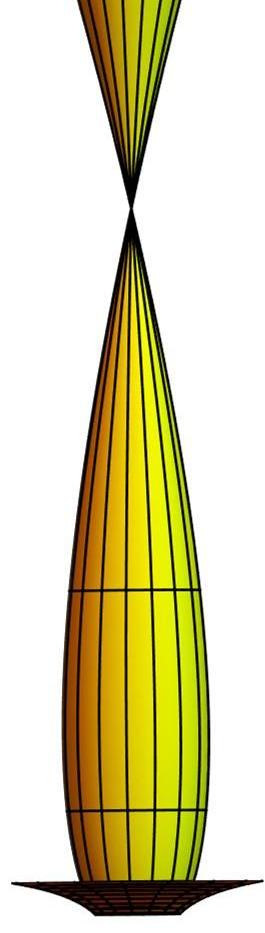}
      \caption{\smash{HQSF-surface of rotation}}
      \end{minipage}
      \end{figure}}
 \end{example}  
\begin{example} {\em Considering  $b_1=1, b_2=0, c=4, c_1=1, c_2=\displaystyle\frac{1}{2}, z_1=3$ in Theorem 3, we obtain
  \[
X_2(u_1)=(A_2(u_1)\cos u_2, A_2(u_1)\sin u_2, B_2(u_1)),
\]
and the profile curve is given by
\[
\alpha(u_1)=\left( A_2(u_1),0,B_2(u_1)\right),
\]
where
\begin{eqnarray*}
A_2(u_1)&=&\frac{625 e^{\frac{5 u_1}{4}} \left(7 \sin \left(\frac{25 u_1}{8}\right)+25 \cos \left(\frac{25 u_1}{8}\right)\right) \left(\left(337 e^{4 u_1}-281\right) \sin \left(\frac{25 u_1}{8}\right)+200 \cos \left(\frac{25 u_1}{8}\right)\right)}{663552 \left(e^{2 u_1}+1\right)^3},\\
B_2(u_1)&=&-\frac{625 e^{\frac{9 u_1}{4}}}{331776 \left(e^{2 u_1}+1\right)^3} \left(7 \sin \left(\frac{25 u_1}{8}\right)+25 \cos \left(\frac{25 u_1}{8}\right)\right) \left(50 \left(e^{2 u_1}+3\right) \cos \left(\frac{25 u_1}{8}\right)\right.\\
&&\left.-\left(323 e^{2 u_1}+295\right) \sin \left(\frac{25 u_1}{8}\right)\right).
\end{eqnarray*}
 The profile curve is not regular in nine points, therefore, the HQSF-surface of rotation has nine circles of singularities and three isolated singularities (see Figures 7 and 8).
\begin{figure}[h!]
 \centering
       \begin{minipage}[b]{6.0cm}
        \includegraphics[width=4.2cm]{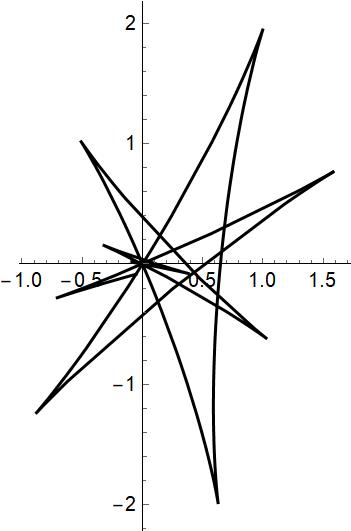}
       \caption{Profile curve}
      \end{minipage}
        \begin{minipage}[b]{6.5cm}
       \includegraphics[width=4.3cm]{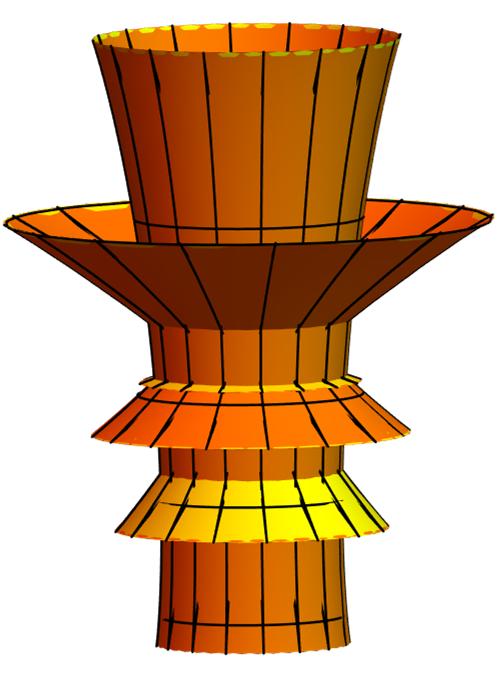}
      \caption{\smash{HQSF-surface of rotation}}
      \end{minipage}
      \end{figure}}
 \end{example}      
 \begin{example} {\em Considering  $b_1=0, b_2=2, c=4, c_1=-1, c_2=-2, z_1=-1$ in Theorem 3, we obtain
  \[
X_2(u_1)=(A_2(u_1)\cos u_2, A_2(u_1)\sin u_2, B_2(u_1)),
\]
and the profile curve is given by
\[
\alpha(u_1)=\left( A_2(u_1),0,B_2(u_1)\right),
\]
where
\begin{eqnarray*}
A_2(u_1)&=&\frac{e^{-u_1} \left(96 e^{4 u_1}-16 \sqrt{2} \left(5 e^{4 u_1}-1\right) \sin \left(2 \sqrt{2} u_1\right)+32 \left(e^{4 u_1}-2\right) \cos \left(2 \sqrt{2} u_1\right)\right)}{\left(e^{2 u_1}+1\right)^3},\\
B_2(u_1)&=&\frac{16 \left(9 e^{2 u_1}-4 \sqrt{2} \left(2 e^{2 u_1}+1\right) \sin \left(2 \sqrt{2} u_1\right)+\left(5 e^{2 u_1}+7\right) \cos \left(2 \sqrt{2} u_1\right)+3\right)}{\left(e^{2 u_1}+1\right)^3}.
\end{eqnarray*}
 The profile curve is not regular only in two points, therefore, the HQSF-surface of rotation has two circles of singularities and three isolated singularities (see Figures 9 and 10).
\begin{figure}[h!]
 \centering
       \begin{minipage}[b]{6.0cm}
        \includegraphics[width=2.4cm]{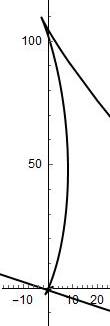}
       \caption{Profile curve}
      \end{minipage}
        \begin{minipage}[b]{6.5cm}
       \includegraphics[width=2.5cm]{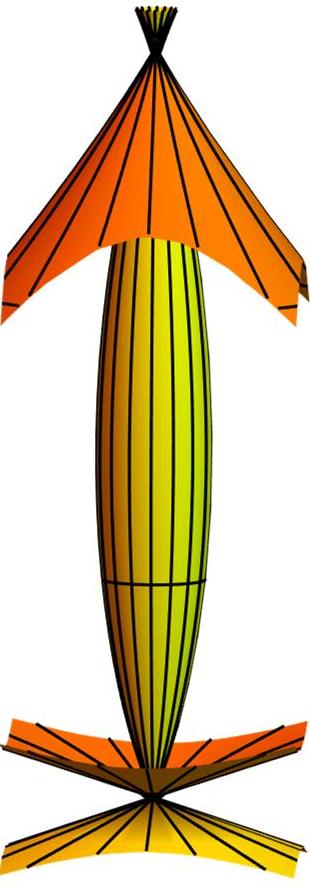}
      \caption{\smash{HQSF-surface of rotation}}
      \end{minipage}
      \end{figure}}
 \end{example}        
\begin{example} {\em Considering  $b_1=1, b_2=-1, c=3, c_1=1, c_2=-1, z_1=1-i$ in Theorem 3, we obtain
  \[
X_2(u_1)=(A_2(u_1)\cos u_2, A_2(u_1)\sin u_2, B_2(u_1)),
\]
and the profile curve is given by
\[
\alpha(u_1)=\left( A_2(u_1),0,B_2(u_1)\right),
\]
where
\begin{eqnarray*}
A_2(u_1)&=&\frac{40 e^{\frac{u_1}{3}} \left(14 e^{4 u_1}-\sqrt{10} \left(7 e^{4 u_1}-1\right) \sin \left(\frac{4 \sqrt{10} u_1}{3}\right)+\left(14 e^{4 u_1}-23\right) \cos \left(\frac{4 \sqrt{10} u_1}{3}\right)+7\right)}{27 \left(e^{2 u_1}+1\right)^3},\\
B_2(u_1)&=&\frac{20 e^{\frac{4 u_1}{3}} \left(35 e^{2 u_1}-2 \sqrt{10} \left(11 e^{2 u_1}+5\right) \sin \left(\frac{4 \sqrt{10} u_1}{3}\right)+\left(65 e^{2 u_1}+83\right) \cos \left(\frac{4 \sqrt{10} u_1}{3}\right)-7\right)}{27 \left(e^{2 u_1}+1\right)^3}.
\end{eqnarray*}
 The profile curve is not regular in six points, therefore, the HQSF-surface of rotation has six circles of singularities and three isolated singularities (see Figures 11 and 12).
\begin{figure}[h!]
 \centering
       \begin{minipage}[b]{6.0cm}
        \includegraphics[width=5.7cm]{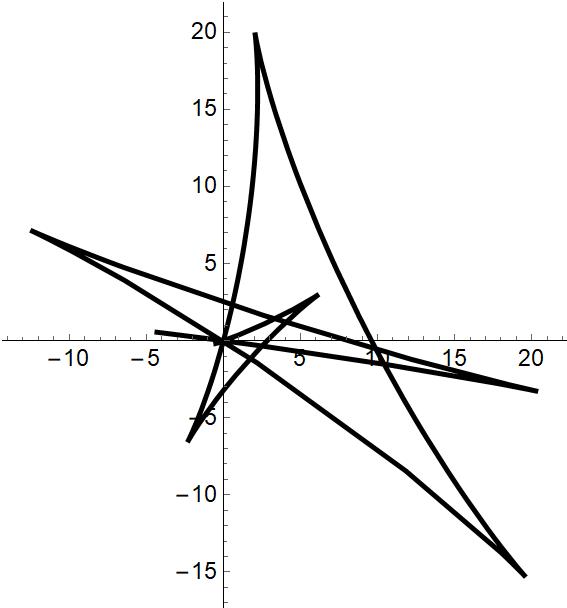}
       \caption{Profile curve}
      \end{minipage}
        \begin{minipage}[b]{6.5cm}
       \includegraphics[width=5.9cm]{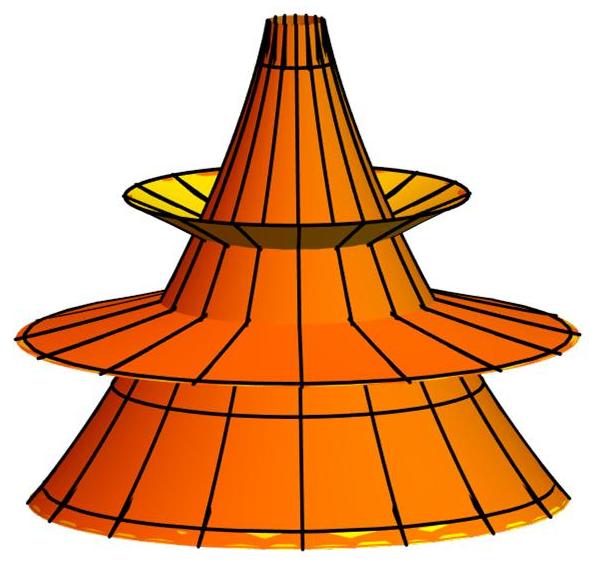}
      \caption{\smash{HQSF-surface of rotation}}
      \end{minipage}
      \end{figure}}
 \end{example}      
\begin{example} {\em Considering  $a_1=1, a_2=1, c=2, c_1=1, c_2=2, z_1=3$ in Theorem 3, we obtain
  \[
X_3(u_1)=(A_3(u_1)\cos u_2, A_3(u_1)\sin u_2, B_3(u_1)),
\]
and the profile curve is given by
\[
\alpha(u_1)=\left( A_3(u_1),0,B_3(u_1)\right),
\]
where
\begin{eqnarray*}
A_3(u_1)&=&\frac{16 e^{3 u_1}}{81 \left(e^{2 u_1}+1\right)^3},\\
B_3(u_1)&=&-\frac{8 e^{4 u_1} \left(e^{2 u_1}+3\right)}{81 \left(e^{2 u_1}+1\right)^3}.
\end{eqnarray*}
 The profile curve is regular, therefore, the HQSF-surface of rotation is complete (see Figures 13 and 14).
\begin{figure}[h!]
 \centering
       \begin{minipage}[b]{6.0cm}
        \includegraphics[width=2.5cm]{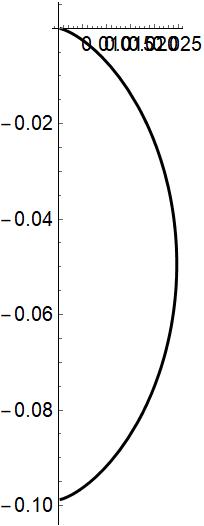}
       \caption{Profile curve}
      \end{minipage}
        \begin{minipage}[b]{6.5cm}
       \includegraphics[width=2.97cm]{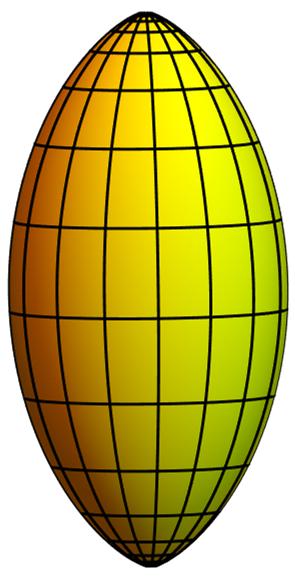}
      \caption{\smash{HQSF-surface of rotation}}
      \end{minipage}
      \end{figure}}
 \end{example}      
 
 \begin{example} {\em Considering  $a_1=1, a_2=1, c=1, c_1=-1, c_2=\frac{1}{2}, z_1=3-\frac{i}{3}$ in Theorem 3, we obtain
  \[
X_3(u_1)=(A_3(u_1)\cos u_2, A_3(u_1)\sin u_2, B_3(u_1)),
\]
and the profile curve is given by
\[
\alpha(u_1)=\left( A_3(u_1),0,B_3(u_1)\right),
\]
where
\begin{eqnarray*}
A_3(u_1)&=&\frac{81 e^{-2 u_1} (3 u_1+5) \left(-3 u_1+e^{4 u_1} (15 u_1+19)+1\right)}{26896 \left(e^{2 u_1}+1\right)^3},\\
B_3(u_1)&=&\frac{81 (3 u_1+5) ((3 u_1+5) \sinh (u_1)+9 (u_1+1) \cosh (u_1))}{6724 \left(e^{2 u_1}+1\right)^3}.
\end{eqnarray*}
 The profile curve is not regular only in two points, therefore, the HQSF-surface of rotation has one isolated singularity and two circles of singularities (see Figures 15 and 16).
\begin{figure}[h!]
 \centering
       \begin{minipage}[b]{6.5cm}
        \includegraphics[width=6.5cm]{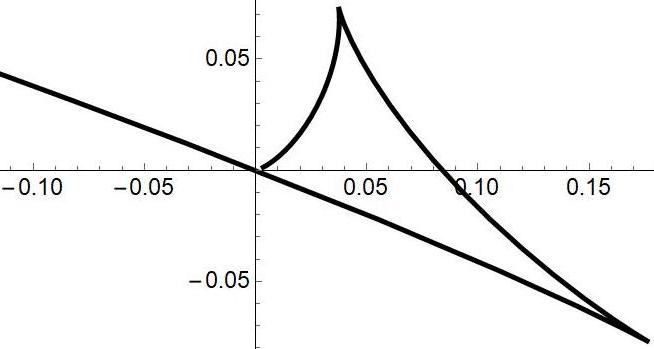}
       \caption{Profile curve}
      \end{minipage}
        \begin{minipage}[b]{6.0cm}
       \includegraphics[width=5.97cm]{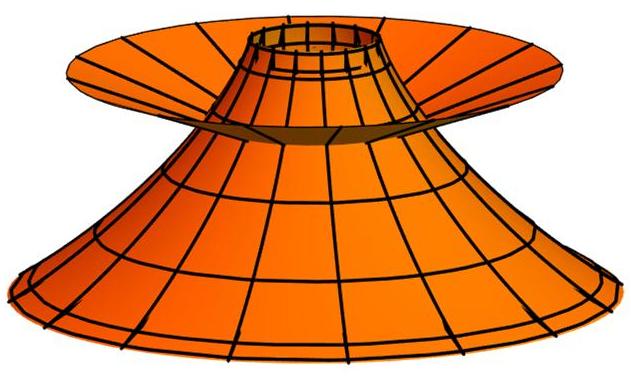}
      \caption{\smash{HQSF-surface of rotation}}
      \end{minipage}
      \end{figure}}
 \end{example}      
 \begin{example} {\em Considering  $a_1=\frac{1}{2}, a_2=-\frac{1}{3}, c=-1, c_1=0, c_2=-\frac{1}{2}, z_1=-1+\frac{i}{3}$ in Theorem 3, we obtain
  \[
X_3(u_1)=(A_3(u_1)\cos u_2, A_3(u_1)\sin u_2, B_3(u_1)),
\]
and the profile curve is given by
\[
\alpha(u_1)=\left( A_3(u_1),0,B_3(u_1)\right),
\]
where
\begin{eqnarray*}
A_3(u_1)&=&\frac{(2 u_1+1) \left(2 u_1+e^{4 u_1} (6 u_1-1)+5\right)}{1600 \left(e^{2 u_1}+1\right)^3},\\
B_3(u_1)&=&\frac{e^{u_1} (2 u_1+1) \left(e^{2 u_1} (2 u_1-1)-2\right)}{400 \left(e^{2 u_1}+1\right)^3}.
\end{eqnarray*}
 The profile curve is not regular only in two points, therefore, the HQSF-surface of rotation has one isolated singularity and two circles of singularities (see Figures 5 and 6).
\begin{figure}[h!]
 \centering
       \begin{minipage}[b]{6.0cm}
        \includegraphics[width=5.7cm]{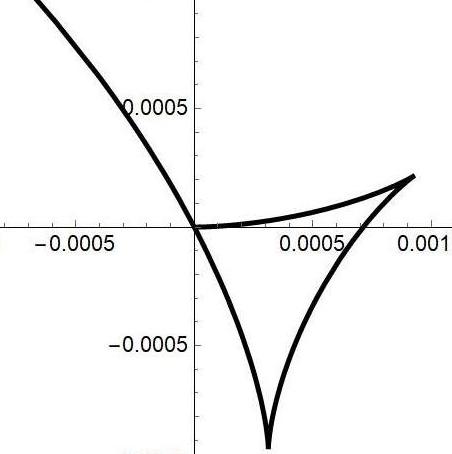}
       \caption{Profile curve}
      \end{minipage}
        \begin{minipage}[b]{6.0cm}
       \includegraphics[width=5.1cm]{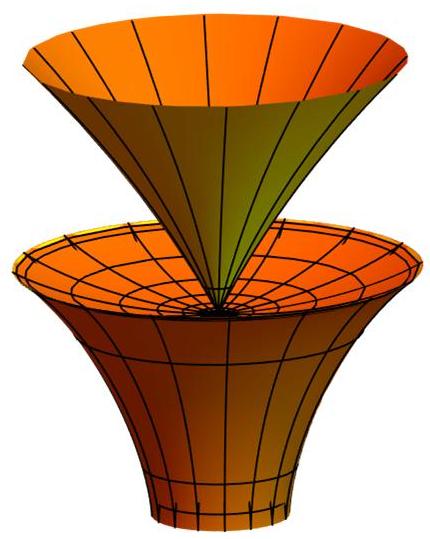}
      \caption{\smash{HQSF-surface of rotation}}
      \end{minipage}
      \end{figure}}
 \end{example}

\end{document}